\newtheorem{thm}{Theorem}[section]
\newtheorem{lem}[thm]{Lemma}
\newtheorem{df}[thm]{Definition}
\newtheorem{cor}[thm]{Corollary}
\title{The multiple Dirichlet product and the multiple Dirichlet series}
\author{TOMOKAZU ONOZUKA}
\begin{document}
\baselineskip=17pt
\date{}
\maketitle
\renewcommand{\thefootnote}{}
\footnote{2010 \emph{Mathematics Subject Classification}: Primary 11M32; Secondary 11A25.}
\footnote{\emph{Key words and phrases}: Multiple zeta function, Multiple zeta star function, Multiple Dirichlet series, Multiple Dirichlet product, Multiple Dirichlet convolution, Zero-free region.}
\footnote{The author was supported by JSPS KAKENHI Grant Number 13J00312.}
\renewcommand{\thefootnote}{\arabic{footnote}}
\setcounter{footnote}{0}
\begin{abstract}
First, we define the multiple Dirichlet product and study the properties of it. From those properties, we obtain a zero-free region of a multiple Dirichlet series and a multiple Dirichlet series expression of the reciprocal of a multiple Dirichlet series.
\end{abstract}
\section{Introduction}

The Euler-Zagier multiple zeta function $\zeta_{EZ,k}(s_1, \ldots,s_k)$, the multiple zeta star function $\zeta_k^*(s_1, \ldots,s_k)$, and the Mordell-Tornheim multiple zeta function $\zeta_{MT,k}(s_1,\ldots,s_k;s_{k+1})$ are defined by
\begin{align}
  &\zeta_{EZ,k}(s_1,\ldots,s_k):=\sum_{0<m_1<m_2< \cdots<m_k}\frac{1}{m_1^{s_1}m_2^{s_2}\cdots m_k^{s_k}},
\label{eq:EZ}
\\
  &\zeta_k^* (s_1,\ldots,s_k) := \sum_{0<m_1\leq m_2\leq \cdots\leq m_k}\frac{1}{m_1^{s_1}m_2^{s_2}\cdots m_k^{s_k}},
\label{eq:str}
\\
  &\zeta_{MT,k}(s_1,\ldots,s_k;s_{k+1}):=\sum_{m_1,\ldots,m_k=1}^{\infty}
  \frac{1}{m_1^{s_1}\cdots m_k^{s_k}(m_1+\cdots+m_k)^{s_{k+1}}},
\label{eq:MT}
\end{align}
respectively, where $s_i\ (i=1, \ldots,k+1)$ are complex variables. Matsumoto \cite{mat1} proved that the series (\ref{eq:EZ}) and (\ref{eq:str}) are absolutely convergent in
\begin{align}
  \left\{(s_1,\ldots,s_k)\in\mathbb{C}^k\ |\ \Re(s_k(k-l+1))>l\ \ (l=1,\ldots,k)\right\}\label{eq:abs}
\end{align}
where $s_k(n)=s_n+s_{n+1}+ \cdots+s_k\ \ (n=1, \ldots,k)$. The series (\ref{eq:MT}) is absolutely convergent in
\[
  \left\{(s_1,\ldots,s_k;s_{k+1})\in \mathbb{C}^{k+1}\ |\ \Re (s_l)>1\ (l=1, \ldots,k),\ \Re(s_{k+1})>0\right\}.
\]
 Akiyama, Egami and Tanigawa \cite{aki1} and Zhao \cite{zhao}, independently of each other, proved the meromorphic continuation of the series (\ref{eq:EZ}) to the whole space. Akiyama, Egami and Tanigawa used the Euler-Maclaurin summation formula, while Zhao used generalized functions to prove the analytic continuation. Matsumoto \cite{mat3} proved the meromorphic continuation of the series (\ref{eq:MT}) to $\mathbb{C}^{k+1}$. The series (\ref{eq:str}) can be expressed by the sum of Euler-Zagier multiple zeta functions and the Riemann zeta function. (Note that the Riemann zeta function is one of Euler-Zagier multiple zeta functions.) For example, $\zeta^*_{2}$ and $\zeta^*_3$ can be expressed as a sum of Euler-Zagier multiple zeta functions as follows:
\begin{align*}
  \zeta^*_2(s_1,s_2)&=\zeta_{EZ,2}(s_1,s_2)+\zeta(s_1+s_2),\\
  \zeta^*_3(s_1,s_2,s_3)&=\zeta_{EZ,3}(s_1,s_2,s_3)+\zeta_{EZ,2}(s_1+s_2,s_3)+\zeta_{EZ,2}(s_1,s_2+s_3)\\
  &\qquad\qquad\qquad\qquad\qquad\qquad\qquad\qquad\qquad+\zeta(s_1+s_2+s_3).
\end{align*}
We can obtain the above expression by separating the sum of the series (\ref{eq:str}). From the above expression and the meromorphic continuation of Euler-Zagier multiple zeta functions, it follows that the multiple zeta star functions can be continued meromorphically to the whole space. 

In this paper, as a generalization of the classical notion of Dirichlet series, we define multiple Dirichlet series by
\begin{align}
  F(s_1,\ldots,s_k;f) := \sum_{m_1,\ldots,m_k=1}^\infty\frac{f(m_1,\ldots,m_k)}{m_1^{s_1}\cdots m_k^{s_k}},
\label{eq:mds}
\end{align}
where $f:\mathbb{N}^k\to\mathbb{C}$ and $(s_1,\ldots,s_k)$ is in the region of absolute convergence for $F(s_1,\ldots,s_k;f)$. The series (\ref{eq:mds}) is a generalization of the series (\ref{eq:EZ}), (\ref{eq:str}) and (\ref{eq:MT}). De la Bret\'eche \cite{bre} treated function (\ref{eq:mds}) in the case $f(m_1,\ldots,m_k)>0$.

In this paper, we study the multiple Dirichlet series defined by (\ref{eq:mds}). 
In Section 2, we consider a ring of multiple arithmetic functions with the usual addition $+$ and the multiple Dirichlet product $*$. In the case of one variable, Cashwell and Everett \cite{cas} proved that the ring is a UFD. In a way similar to that in \cite{cas}, we find that the ring is also a UFD in the multivariable case. In Section 3, we treat the multiple Dirichlet series (\ref{eq:mds}) and prove the main theorem (Theorem \ref{zfr}). The main theorem has two statements. In the first statement, we state about a zero-free region of $F(s_1,\ldots,s_k;f)$. In the second statement, we state that the reciprocal of $F(s_1,\ldots,s_k;f)$ has a multiple Dirichlet series expression $F(s_1,\ldots,s_k;f^{-1})$. Finally, by using the main theorem, we get a multiple Dirichlet series expression of the reciprocal of $\zeta_k^*(s_1,\ldots,s_k)$.

The author would like to express his thanks to Professor Kohji Matsumoto, Professor L\'aszl\'o T\'oth, and Miss Ade Irma Suriajaya for their valuable advice and comments.

\section{The ring of multiple arithmetic functions}

We call $f: \mathbb{N}^k\longrightarrow \mathbb{C}$ a multiple (k-tuple) arithmetic function,
and we define the set of $k$-tuple arithmetic functions by
\begin{equation*}
\Omega=\Omega_k:=\{f\ |\ f: \mathbb{N}^k\longrightarrow \mathbb{C}\}.
\end{equation*}
We define the set $U$ by;
\begin{equation*}
 U=U_k:=\{f\in\Omega\ |\ f(1,\ldots,1)\neq0\}.
\end{equation*}
We use bold letters to express $k$-tuple of integers like $\bm{a}=(a_1, \ldots,a_k)$. In particular, $\bm{1}$ means $(1, \ldots,1)$. Moreover we define the product of two $k$-tuple of integers $\bm{a}\cdot\bm{b}$ by $(a_1b_1, \ldots,a_kb_k)$.
\begin{df}\label{mdp}\
For $f,g\in\Omega$ and $\bm{n}\in \mathbb{N}^k$, we define the multiple Dirichlet product $*$ by
\begin{align*}
  (f*g)(\bm{n})=\sum_{\substack{\bm{a}\cdot\bm{b}=\bm{n}\\\bm{a},\bm{b}\in \mathbb{N}^k}}f(\bm{a})g(\bm{b}).
\end{align*}
\end{df}
If $k=1$, the multiple Dirichlet product is the well-known Dirichlet product. Hence the above product is a generalization of the Dirichlet product. In the present section, our purpose is to study the properties of the multiple Dirichlet product. The properties were studied by some mathematicians. In \cite{to}, T\'oth described the details of such studies. In particular, if we define an addition as $(f+g)(\bm{n}):=f(\bm{n})+g(\bm{n})$, he mentioned that $(\Omega_k,+,*)$ is an integral domain with the identity function $I$ which is defined by
\begin{align*}
  I(\bm{n}):=\left\{
  \begin{array}{ll}
  1\ \ &(\bm{n}=\bm{1}),\\
  0&({\rm otherwise}),
  \end{array}
  \right.
\end{align*}
and its unit group is $U$. For $f\in U$, $f^{-1}$ denotes the inverse function of $f$ with respect to the multiple Dirichlet product $*$. Then $f^{-1}$ is given by the following lemma.

\begin{lem}\label{thm3}\
For each $f\in U$, $f^{-1}\in U$ can be constructed recursively as follows;
\begin{align*}
  f^{-1}(\bm{1})&=\frac{1}{f(\bm{1})},\\
  f^{-1}(\bm{n})&=-\frac{1}{f(\bm{1})}\sum_{\substack{ \bm{a}\cdot\bm{b}=\bm{n}\\\bm{b}\neq\bm{n}}}f(\bm{a})f^{-1}(\bm{b})\ &(\bm{n}\neq1).
\end{align*}
\end{lem}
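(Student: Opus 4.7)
The plan is to verify directly that the recursively defined function $f^{-1}$ satisfies $f*f^{-1}=I$. Since the excerpt already records that $(\Omega_k,+,*)$ is an integral domain with identity $I$ and unit group $U$, the multiplicative inverse of $f\in U$ is unique, so exhibiting one function with this property is enough.

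First, I would check that the recursion is well-defined. Let $N(\bm{n}):=n_1n_2\cdots n_k$. The base case $N(\bm{n})=1$ corresponds to $\bm{n}=\bm{1}$ and is handled by $f^{-1}(\bm{1})=1/f(\bm{1})$, which makes sense because $f\in U$. For $\bm{n}\neq\bm{1}$, the defining sum involves only values $f^{-1}(\bm{b})$ with $\bm{a}\cdot\bm{b}=\bm{n}$ and $\bm{b}\neq\bm{n}$; in that case some $a_i\geq 2$, hence $N(\bm{b})\leq N(\bm{n})/2<N(\bm{n})$, and strong induction on $N(\bm{n})$ shows $f^{-1}(\bm{n})$ is a well-defined complex number.

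Next, I would verify the identity $(f*f^{-1})(\bm{n})=I(\bm{n})$ directly from Definition \ref{mdp}. When $\bm{n}=\bm{1}$, the only decomposition $\bm{a}\cdot\bm{b}=\bm{1}$ in $\mathbb{N}^k$ is $\bm{a}=\bm{b}=\bm{1}$, so
\[
  (f*f^{-1})(\bm{1})=f(\bm{1})f^{-1}(\bm{1})=1=I(\bm{1}).
\]
When $\bm{n}\neq\bm{1}$, I would split the sum $\sum_{\bm{a}\cdot\bm{b}=\bm{n}}f(\bm{a})f^{-1}(\bm{b})$ into the single term with $\bm{b}=\bm{n}$ (forcing $\bm{a}=\bm{1}$), which contributes $f(\bm{1})f^{-1}(\bm{n})$, and the remaining terms with $\bm{b}\neq\bm{n}$. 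By the very definition of $f^{-1}(\bm{n})$, the latter sum equals $-f(\bm{1})f^{-1}(\bm{n})$, so the two contributions cancel and the total is $0=I(\bm{n})$. Finally, $f^{-1}(\bm{1})=1/f(\bm{1})\neq 0$ shows $f^{-1}\in U$.

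There is essentially no obstacle here: the argument is the direct multivariable analogue of the classical construction of the Dirichlet inverse, and the only point that needs care is the well-foundedness of the recursion, which is handled by the induction on $N(\bm{n})$. The formula would fail to make sense if there were infinite descending chains of proper divisors in $\mathbb{N}^k$, but the multiplicative bound $N(\bm{b})<N(\bm{n})$ rules this out.
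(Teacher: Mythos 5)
Your proposal is correct and follows essentially the same route as the paper: verify the recursion is well-founded by induction and then check directly that $(f*f^{-1})(\bm{n})=I(\bm{n})$ by isolating the $\bm{b}=\bm{n}$ term. The only cosmetic difference is that you induct on the product $n_1\cdots n_k$ while the paper inducts on the sum $n_1+\cdots+n_k$; both orderings make the recursion well-defined.
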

\begin{proof}
In the case $n_1+ \cdots+n_k=k$, i.e. $\bm{n}=\bm{1}$, we have
\begin{align*}
  \left(f*f^{-1}\right)(\bm{1})=f(\bm{1})f^{-1}(\bm{1})=1.
\end{align*}
Next, let $d>k$, and assume that $f^{-1}(\bm{n})$ are determined for all $\bm{n}\in\mathbb{N}^k$ which satisfies $n_1+ \cdots+n_k<d$. Then for $\bm{n}\in \mathbb{N}^k$ which satisfies $n_1+ \cdots+n_k=d$, we have 
\begin{align*}
  \left(f*f^{-1}\right)(\bm{n})&=\sum_{\bm{a}\cdot\bm{b}=\bm{n}}f(\bm{a})f^{-1}(\bm{b})\\
  &=f(\bm{1})f^{-1}(\bm{n})+
  \sum_{\substack{ \bm{a}\cdot\bm{b}=\bm{n}\\\bm{b}\neq\bm{n}}}f(\bm{a})f^{-1}(\bm{b})=0.
\end{align*}
\end{proof}

In this section, let us consider $(\Omega_k,+,*)$ and prove that $(\Omega_k,+,*)$ is a UFD.

First, we define a norm $N:\Omega\longrightarrow\mathbb{Z}_{\geq0}$ by
\begin{align*}
  N(f)=\begin{cases}
    0  &(f=0),\\
    \underset{\bm{n}\in\mathbb{N}^k}{\min\ }\{n_1\cdots n_k\ |\ f(\bm{n})\neq0\}  &(f\neq0).
  \end{cases}
\end{align*}
By the definition of the norm, we can show that $N(f)=1$ if and only if $f\in U$ holds.
\begin{thm}\label{norm}
For $f,g\in\Omega$, we have
\[
  N(f*g)=N(f)N(g).
\]
\end{thm}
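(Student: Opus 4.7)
The plan is to prove the two inequalities $N(f*g)\ge N(f)N(g)$ and $N(f*g)\le N(f)N(g)$, after first disposing of the trivial case where one of $f,g$ vanishes (if $f=0$ then $f*g=0$, so both sides are $0$, and similarly for $g$). So assume $f,g\neq 0$.

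For the lower bound, I would argue that if $(f*g)(\bm n)\neq 0$ then at least one summand $f(\bm a)g(\bm b)$ with $\bm a\cdot\bm b=\bm n$ must be nonzero. By the definition of the norm, $a_1\cdots a_k\ge N(f)$ and $b_1\cdots b_k\ge N(g)$, so $n_1\cdots n_k=(a_1\cdots a_k)(b_1\cdots b_k)\ge N(f)N(g)$. Taking the minimum over such $\bm n$ yields $N(f*g)\ge N(f)N(g)$.

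For the upper bound I must produce a single $\bm n^\ast$ with $n^\ast_1\cdots n^\ast_k=N(f)N(g)$ for which $(f*g)(\bm n^\ast)\neq 0$. Let $A=\{\bm a\in\mathbb N^k\mid f(\bm a)\neq 0,\ a_1\cdots a_k=N(f)\}$ and $B$ analogously for $g$; both are nonempty. The main obstacle is that, unlike the classical one-variable case, the equation $\bm a\cdot\bm b=\bm n^\ast$ may admit several componentwise factorizations, so naively choosing any $\bm a^\ast\in A,\bm b^\ast\in B$ leaves us vulnerable to cancellation. To rule this out I would order $\mathbb N^k$ lexicographically and take $\bm a^\ast$ and $\bm b^\ast$ to be the lex-minimal elements of $A$ and $B$ respectively, and set $\bm n^\ast=\bm a^\ast\cdot\bm b^\ast$.

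The key claim is then $(f*g)(\bm n^\ast)=f(\bm a^\ast)g(\bm b^\ast)$. Suppose $\bm a\cdot\bm b=\bm n^\ast$ with $f(\bm a)g(\bm b)\neq 0$. By the lower-bound argument, equality $(a_1\cdots a_k)(b_1\cdots b_k)=N(f)N(g)$ forces $\bm a\in A$ and $\bm b\in B$, hence $\bm a\ge_{\mathrm{lex}}\bm a^\ast$ and $\bm b\ge_{\mathrm{lex}}\bm b^\ast$. Comparing coordinates one at a time: $a_1b_1=a^\ast_1b^\ast_1$ combined with $a_1\ge a^\ast_1$ and $b_1\ge b^\ast_1$ forces $a_1=a^\ast_1$ and $b_1=b^\ast_1$; this equality of first coordinates together with the lex condition gives $a_2\ge a^\ast_2,\ b_2\ge b^\ast_2$, and $a_2b_2=a^\ast_2b^\ast_2$ again forces equality; inducting on the coordinate index yields $\bm a=\bm a^\ast,\ \bm b=\bm b^\ast$. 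Thus only the one term $f(\bm a^\ast)g(\bm b^\ast)\neq 0$ survives in the convolution sum, so $(f*g)(\bm n^\ast)\neq 0$ and $N(f*g)\le n^\ast_1\cdots n^\ast_k=N(f)N(g)$, completing the proof.
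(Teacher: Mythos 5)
Your proposal is correct and follows essentially the same route as the paper: the paper's tuple $\bm{M}$ (defined by successively minimizing $M_1, M_2,\ldots$ over the nonzero support with coordinate product $N(f)$) is exactly your lex-minimal element $\bm{a}^\ast$ of $A$, and likewise $\bm{L}=\bm{b}^\ast$. Your coordinate-by-coordinate argument that the only surviving term in the convolution at $\bm{a}^\ast\cdot\bm{b}^\ast$ is $f(\bm{a}^\ast)g(\bm{b}^\ast)$ is the same cancellation-exclusion step the paper carries out by examining the first index where a competing factorization deviates from $(\bm{M},\bm{L})$.
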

\begin{proof}
In the case $f=0$ or $g=0$, since $f*g=0$, we have $N(f*g)=N(f)N(g)=0$. Hence we consider the case $f,g\neq0$. If $n_1 \cdots n_k<N(f)N(g)$, then we have
\begin{align*}
  (f*g)(\bm{n})&=\sum_{\bm{a}\cdot\bm{b}=\bm{n}}f(\bm{a})g(\bm{b})=0,
\end{align*}
since $a_1 \cdots a_k<N(f)$ or $b_1 \cdots b_k<N(g)$. Therefore it is sufficient to find $\bm{n}$ such that $(f*g)(\bm{n})\neq0$ and $n_1\cdots n_k=N(f)N(g)$. We define $(M_1, \ldots,M_k), (L_1, \ldots,L_k) \in \mathbb{N}^k$ as follows;
\begin{align*}
  M_1&:= \min\{m_1\ |\ f(m_1,\ldots,m_k)\neq0,\ m_1 \cdots m_k=N(f)\},\\
  M_2&:= \min\{m_2\ |\ f(M_1,m_2,\ldots,m_k)\neq0,\ M_1m_2 \cdots m_k=N(f)\},\\
  &\ \ \vdots\\
  M_k&:= \min\{m_k\ |\ f(M_1,\ldots,M_{k-1},m_k)\neq0,\ M_1\cdots  M_{k-1}m_k=N(f)\},\\
  L_1&:= \min\{l_1\ |\ g(l_1,\ldots,l_k)\neq0,\ l_1 \cdots l_k=N(g)\},\\
  L_2&:= \min\{l_2\ |\ g(L_1,l_2,\ldots,l_k)\neq0,\ L_1l_2 \cdots l_k=N(g)\},\\
  &\ \ \vdots\\
  L_k&:= \min\{l_k\ |\ g(L_1,\ldots,L_{k-1},l_k)\neq0,\ L_1\cdots  L_{k-1}l_k=N(g)\}.
\end{align*}
Next we prove
\begin{align}
  (f*g)(\bm{M}\cdot\bm{L})&=\sum_{\bm{a}\cdot\bm{b}=\bm{M}\cdot\bm{L}}f(\bm{a})g(\bm{b})\notag\\
  & =f(\bm{M})g(\bm{L})\neq0.\label{MLmethod}
\end{align}
For $\bm{a}\neq\bm{M}$ with $a_1\cdots a_k=N(f)$, there exists an index $i$ such that $a_i\neq M_i$ and $a_j=M_j$ for all $j<i$. If $a_i<M_i$, by the definition of $M_i$, $f(\bm{a})=0$. If $a_i>M_i$, then $b_i<L_i$ and $b_j=L_j$ for $j<i$, so $g(\bm{B})=0$. Hence (\ref{MLmethod}) holds.

Hence we have $N(f*g)=N(f)N(g)$.
\end{proof}

Next, we define an equivalence relation among functions belonging to $\Omega$. For $f,g\in\Omega$, we write $f\sim g$ and say $f$ is equivalent to $g$ if there exists a function $\varepsilon\in U$ such that $f=\varepsilon *g$. Also we write $f|g$ if $g=f*h$ for some $h\in\Omega$. 
\begin{thm}
Suppose $f,g\in\Omega$. Then $f$ is equivalent to $g$, if and only if $f|g$ and $g|f$. 
\end{thm}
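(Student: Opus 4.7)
The plan is to prove the two implications separately, leaning on two facts already established in the section: first, that the unit group of $(\Omega_k,+,*)$ is exactly $U$ (so that every $\varepsilon\in U$ has an inverse $\varepsilon^{-1}\in U$ via Lemma \ref{thm3}), and second, that the norm $N$ is multiplicative (Theorem \ref{norm}).

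For the easy direction, assume $f\sim g$, so $f=\varepsilon*g$ for some $\varepsilon\in U$. Since $\varepsilon\in U$, it has an inverse $\varepsilon^{-1}\in U$, and commutativity of $*$ gives $f=g*\varepsilon$ and $g=f*\varepsilon^{-1}$. Thus $g\mid f$ (via the witness $\varepsilon$) and $f\mid g$ (via the witness $\varepsilon^{-1}$).

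For the converse, suppose $f\mid g$ and $g\mid f$, so $g=f*h_1$ and $f=g*h_2$ for some $h_1,h_2\in\Omega$. If $f=0$, then $g=f*h_1=0$ as well, so $f=I*g$ and $f\sim g$. Otherwise $f\neq 0$, and substituting gives $f=f*h_1*h_2$. Applying $N$ and using Theorem \ref{norm},
\begin{equation*}
  N(f)=N(f)\,N(h_1)\,N(h_2),
\end{equation*}
with $N(f)\geq 1$, so $N(h_1)N(h_2)=1$. Hence $N(h_1)=N(h_2)=1$, which by the remark immediately after the definition of $N$ means $h_1,h_2\in U$. Taking $\varepsilon:=h_2\in U$ then yields $f=\varepsilon*g$, so $f\sim g$.

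The only mildly delicate point is handling the case $f=0$ (or $g=0$), where the norm argument does not give information; this is dispatched by observing that divisibility forces both to vanish and choosing $\varepsilon=I$. The key technical content is otherwise just the multiplicativity of $N$, which converts the algebraic identity $f=f*h_1*h_2$ directly into $h_1,h_2\in U$ without needing to invoke the integral domain property separately.
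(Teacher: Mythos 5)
Your proof is correct and follows essentially the same route as the paper: the forward direction uses the invertibility of the unit $\varepsilon$, and the converse uses multiplicativity of the norm (Theorem \ref{norm}) together with the characterization $N(h)=1\iff h\in U$, with the degenerate case $f=0$ handled separately. The only cosmetic difference is that you substitute to get $f=f*h_1*h_2$ and cancel $N(f)$ once, whereas the paper takes norms of the two divisibility relations separately and multiplies them; both are equivalent.
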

\begin{proof}

Assume $f\sim g$. Then there exists a function $\varepsilon\in U$ such that $f=\varepsilon *g$, that is, $g|f$. Since $\varepsilon\in U$, $\varepsilon^{-1}*f=g$ holds. This means $f|g$.

Next, we assume $f|g$ and $g|f$. If $f=0$, then $g$ is also $0$. Therefore $f=\varepsilon*g$ holds for any $\varepsilon\in U$. Hence we are only left to consider the case $f\neq0$.
We see that $f|g$ and $g|f$ if and only if there exist $\alpha,\beta\in\Omega$ such that $f= \alpha*g$ and $g= \beta*f$, and if so, then $N(f)=N(\alpha)N(g)$ and $N(g)=N(\beta)N(f)$. From the above equations, it follows that $N(\alpha)N(\beta)=1$. It implies $\alpha,\beta\in U$. Hence $f$ is equivalent to $g$.
\end{proof}

For $p\in\Omega_k\setminus U$, we call $p$ {\it prime} if $p=f*g$ implies $f\in U$ or $g\in U$. We define $P_k=P$ to be the set of all prime functions. If $N(f)$ is prime in $\mathbb{N}$, then $f$ is prime in $\Omega$. Hence we can find infinitely many primes. A multiple arithmetic function $f\in\Omega$ is said to be {\it composite} if $f$ satisfies $f\neq0$, $f\notin U$, and $f\notin P$. If $f\sim g$, then $f\in P$ implies $g\in P$. This property indicates that the equivalence relation $\sim$ preserves primitivity. The same property holds for $0$, $U$, and composite functions, respectively.

Next, we show that each non-zero function $f\in\Omega\setminus U$ can be decomposed into a finite product of prime functions. In the case $f\in P$, the function $f$ itself is the finite product of prime functions. Hence we consider the case $f\notin P$. By the assumption, there exist multiple arithmetic functions $g,h\in\Omega\setminus U$ such that $f=g*h$. Then the inequalities $1<N(g),N(h)<N(f)$ hold. If both of $g$ and $h$ are primes, then $g*h$ is the prime factorization of $f$. If $g$ is a composite function, then there exist multiple arithmetic functions $g_1,g_2\in\Omega\setminus U$ such that $g=g_1*g_2$, and the inequalities $1<N(g_1),N(g_2)<N(g)$ hold. Repeating the above algorithm, we can obtain the prime factorization of $f$. Because the norm is a non-negative integer, the product is finite.

Let $P_\mathbb{N}$ be the set of all prime numbers in $\mathbb{N}$. We define a set $\mathcal{P}_k=\mathcal{P}$ by
\begin{align*}
  \mathcal{P}_k=\mathcal{P}:=\{(1,\ldots,1,\underset{j}{p},1,\ldots,1)\in\mathbb{N}^k\ |\ p\in P_\mathbb{N},\ 1\leq j\leq k\}.
\end{align*}
We take a bijection $S:\mathbb{N}\longrightarrow\mathcal{P}$. Since $\mathcal{P}$ is a countably infinite set, we can take the bijection $S$. We put $\bm{p}_j:=S(j)$. Then each $\bm{m}\in \mathbb{N}^k$ can be decomposed into a finite product of $\mathcal{P}$. We define maps $\alpha_j\ (j\in\mathbb{N})$ as follows; 
\begin{align*}
\bm{m}=:\bm{p}_1^{\alpha_1(\bm{m})}\cdot\bm{p}_2^{\alpha_2(\bm{m})}\cdot\cdots.
\end{align*}
The definition of $\alpha_j$ implies the equation $\alpha_j(\bm{a}\cdot\bm{b})=\alpha_j(\bm{a})+\alpha_j(\bm{b})$. By using $\alpha_j$, we define a map $R:\Omega\longrightarrow\mathbb{C}_\omega:=\mathbb{C}\{x_1,x_2, \ldots\}$ by $R(f)=\sum_{m_1,\ldots,m_k=1}^{\infty}f(\bm{m})x_1^{\alpha_1(\bm{m})}x_2^{\alpha_2(\bm{m})}\cdots$ for $f\in\Omega$,
where $\mathbb{C}_\omega$ is the set of the infinite series of the above form, where each term has only a finite number of $x_j$. Note that although the series $R(f)$ contains $k$, the set $\mathbb{C}_\omega$ does not depend on $k$.
\begin{lem}\label{iso}
The map $R$ is an isomorphism.
\end{lem}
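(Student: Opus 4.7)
The plan is to show $R$ is a ring isomorphism by verifying (i) it is a well-defined map into $\mathbb{C}_\omega$, (ii) it is additive, (iii) it is multiplicative with respect to $*$ on the domain and the usual product of formal series on the codomain, and (iv) it is bijective. The unique factorization of every $\bm{m}\in\mathbb{N}^k$ into elements of $\mathcal{P}$ is what makes the whole construction go through.

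First I would check that $R(f)$ actually lies in $\mathbb{C}_\omega$. For each fixed $\bm{m}\in\mathbb{N}^k$, only finitely many of the exponents $\alpha_j(\bm{m})$ are non-zero (exactly those $j$ corresponding to the finitely many prime-power factors of the coordinates of $\bm{m}$), so the monomial $x_1^{\alpha_1(\bm{m})}x_2^{\alpha_2(\bm{m})}\cdots$ genuinely involves only finitely many variables. Additivity $R(f+g)=R(f)+R(g)$ is immediate from the pointwise definition of $+$ on $\Omega$.

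The central computation is multiplicativity. Using the key identity $\alpha_j(\bm{a}\cdot\bm{b})=\alpha_j(\bm{a})+\alpha_j(\bm{b})$ stated just before the lemma, I would write
\begin{align*}
R(f*g) &= \sum_{\bm{m}\in\mathbb{N}^k}\Bigl(\sum_{\bm{a}\cdot\bm{b}=\bm{m}}f(\bm{a})g(\bm{b})\Bigr)\prod_{j\geq1}x_j^{\alpha_j(\bm{m})} \\
&= \sum_{\bm{a},\bm{b}\in\mathbb{N}^k}f(\bm{a})g(\bm{b})\prod_{j\geq1}x_j^{\alpha_j(\bm{a})+\alpha_j(\bm{b})} \\
&= \Bigl(\sum_{\bm{a}}f(\bm{a})\prod_j x_j^{\alpha_j(\bm{a})}\Bigr)\Bigl(\sum_{\bm{b}}g(\bm{b})\prod_j x_j^{\alpha_j(\bm{b})}\Bigr) = R(f)R(g).
\end{align*}
There is no convergence issue because $\mathbb{C}_\omega$ is a ring of formal series: each fixed monomial on the left arises from only finitely many pairs $(\bm{a},\bm{b})$, so the coefficient-by-coefficient rearrangement is legitimate.

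Finally, bijectivity follows from unique factorization in $\mathbb{N}^k$. Because $S:\mathbb{N}\to\mathcal{P}$ is a bijection and every $\bm{m}\in\mathbb{N}^k$ has a unique representation $\bm{m}=\bm{p}_1^{\alpha_1(\bm{m})}\bm{p}_2^{\alpha_2(\bm{m})}\cdots$ with only finitely many non-zero exponents, the assignment $\bm{m}\mapsto (\alpha_1(\bm{m}),\alpha_2(\bm{m}),\ldots)$ is a bijection between $\mathbb{N}^k$ and the set of finitely supported sequences of non-negative integers, i.e.\ with the set of monomials that form a basis of $\mathbb{C}_\omega$. Hence $R$ simply reads off the coefficients of an element of $\mathbb{C}_\omega$ against this basis, so $R$ is injective (the zero series forces every $f(\bm{m})=0$) and surjective (any series in $\mathbb{C}_\omega$ uniquely prescribes values $f(\bm{m})$). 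The only subtle step is the rearrangement in the multiplicativity calculation, but it is justified formally term-by-term, so no real obstacle arises.
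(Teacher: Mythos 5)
Your proposal is correct and follows essentially the same route as the paper: additivity is immediate, multiplicativity comes from the identity $\alpha_j(\bm{a}\cdot\bm{b})=\alpha_j(\bm{a})+\alpha_j(\bm{b})$ and factoring the double sum, and bijectivity comes from the unique correspondence between $\bm{m}\in\mathbb{N}^k$ and monomials. You merely spell out a few points the paper leaves implicit (well-definedness and the term-by-term justification of the rearrangement), which is fine.
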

\begin{proof}
The equation $R(f+g)=R(f)+R(g)$ is trivial. In addition, we have
\begin{align*}
  R(f*g)&=\sum_{m_1,\ldots,m_k=1}^{\infty}\sum_{\bm{a}\cdot\bm{b}=\bm{m}}f(\bm{a})g(\bm{b})
  x_1^{\alpha_1(\bm{a})+\alpha_1(\bm{b})}x_2^{\alpha_2(\bm{a})+\alpha_2(\bm{b})}\cdots\\
  &=\left(\sum_{\bm{a}}f(\bm{a})x_1^{\alpha_1(\bm{a})}x_2^{\alpha_2(\bm{a})}\cdots\right) 
  \left(\sum_{\bm{b}}g(\bm{b})x_1^{\alpha_1(\bm{b})}x_2^{\alpha_2(\bm{b})}\cdots\right)\\
  &=R(f)R(g).
\end{align*}
Hence $R$ is homomorphism.

Next, we show that $R$ is a bijection. Trivially $f\neq g$ implies $R(f)\neq R(g)$. For any $A\in \mathbb{C}_\omega$, we can constitute a multiple arithmetic function $f_A\in\Omega$ by using coefficients of $A$ such that $R(f_A)=A$.
\end{proof}

In the case of one variable, Cashwell and Everett \cite{cas} proved that the ring $(\Omega_1,+,*)$ is a UFD by first showing that $\Omega_1$ is isomorphic to $\mathbb{C}_\omega$ and then showing that $(\mathbb{C}_{\omega},+,\times)$ is a UFD. Here, for the multivariable case, we use Lemma \ref{iso} and the result of Cashwell and Everett that $(\mathbb{C}_{\omega},+,\times)$ is a UFD, and we obtain the following theorem. 
\begin{thm}
$(\Omega_k,+,*)$ is a UFD.
\end{thm}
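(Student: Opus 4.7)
The plan is essentially to transport the UFD structure across the isomorphism $R$ built in Lemma~\ref{iso}. Since being a UFD is a purely ring-theoretic property (existence and essentially unique factorization into irreducibles), it is preserved by any ring isomorphism, so once we have $\Omega_k\cong\mathbb{C}_\omega$ as rings and know $\mathbb{C}_\omega$ is a UFD, the theorem follows immediately.

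Concretely, I would first invoke Lemma~\ref{iso} to identify $(\Omega_k,+,*)$ with $(\mathbb{C}_\omega,+,\times)$ via the ring isomorphism $R$. Next I would cite the theorem of Cashwell and Everett~\cite{cas}, which asserts that the ring $\mathbb{C}_\omega$ of formal power series in countably many variables (with each element involving only finitely many of the $x_j$) is a UFD. Then I would note that $R$ sends units to units (indeed $R(U)$ is exactly the group of units of $\mathbb{C}_\omega$, namely those series with nonzero constant term), sends primes to primes, and transports factorizations to factorizations; hence any irreducible decomposition in $\mathbb{C}_\omega$ pulls back via $R^{-1}$ to an irreducible decomposition in $\Omega_k$, and uniqueness up to units and reordering is likewise preserved.

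There is essentially no obstacle, since the analysis of norms and of the equivalence relation $\sim$ carried out earlier in the section has already verified that $\Omega_k$ is an integral domain with unit group $U$, and the existence of finite factorizations into primes has already been established via the norm $N$. The only thing that remains unproved at this point in the paper is the essential uniqueness of the prime factorization, and this is precisely what the transfer from the Cashwell--Everett theorem supplies. So the write-up reduces to a single short paragraph citing Lemma~\ref{iso} and \cite{cas}, and remarking that the UFD property is invariant under ring isomorphism.
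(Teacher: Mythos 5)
Your proposal is correct and follows exactly the paper's own route: the author likewise obtains the theorem by combining Lemma~\ref{iso} (the isomorphism $R:\Omega_k\to\mathbb{C}_\omega$) with the Cashwell--Everett result that $(\mathbb{C}_\omega,+,\times)$ is a UFD, transporting the unique factorization property across the isomorphism. No further comment is needed.
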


In addition to the above theorem, Lemma \ref{iso} implies an another theorem. Lemma \ref{iso} states that $\Omega_k$ is isomorphic to $\mathbb{C}_\omega$ for all $k\in\mathbb{N}$. This fact means that $\Omega_k$ is isomorphic to $\Omega_l$ for $k,l\in\mathbb{N}$.
\begin{thm}
All of $\Omega_k$ are isomorphic, i.e. $\Omega_k\cong \Omega_l$ holds for all $k,l\in\mathbb{N}$.
\end{thm}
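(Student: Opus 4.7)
The plan is to derive this theorem as an immediate consequence of Lemma~\ref{iso}. Applied with parameter $k$, that lemma furnishes a ring isomorphism $R_k : \Omega_k \to \mathbb{C}_\omega$; applied with parameter $l$, it furnishes another $R_l : \Omega_l \to \mathbb{C}_\omega$. The critical observation, already flagged in the remark following the definition of $R$, is that the target $\mathbb{C}_\omega = \mathbb{C}\{x_1, x_2, \ldots\}$ is literally the same ring regardless of $k$: although the construction of $R$ uses the bijection $S : \mathbb{N} \to \mathcal{P}_k$, and the set $\mathcal{P}_k$ does depend on $k$, the codomain is just the ring of formal series in the indeterminates $x_1, x_2, \ldots$ of the prescribed shape, with no residual dependence on $k$.

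Given this, the main step is simply to form the composition
\[
  \varphi := R_l^{-1} \circ R_k : \Omega_k \longrightarrow \Omega_l.
\]
Since the composition of two ring isomorphisms is again a ring isomorphism, $\varphi$ provides the desired isomorphism $\Omega_k \cong \Omega_l$.

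There is no genuine obstacle in this argument, as the substantive work has already been carried out in the proof of Lemma~\ref{iso}. The only point worth emphasizing, to pre-empt any confusion about the role of the index $k$, is that $\varphi$ is not canonical: it depends on the choices of the bijections $S$ used in building $R_k$ and $R_l$. Different choices produce different isomorphisms, but each is a ring isomorphism, which is all the theorem claims.
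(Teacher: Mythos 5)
Your proposal is correct and matches the paper's own argument: the paper derives the theorem immediately from Lemma \ref{iso} by noting that $\mathbb{C}_\omega$ does not depend on $k$, so $\Omega_k\cong\mathbb{C}_\omega\cong\Omega_l$. Your explicit composition $R_l^{-1}\circ R_k$ and the remark on non-canonicity are just slightly more detailed renderings of the same idea.
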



\section{The multiple Dirichlet series}
In this section, we consider the zero-free region of the multiple Dirichlet series and the reciprocal of the multiple Dirichlet series by using the notion of multiple Dirichlet product. The following theorem is a basic property of the multiple Dirichlet series.
\begin{thm}\label{pro}(\cite{to}, Proposition 10)
For $f,g\in\Omega_k$, we have
\begin{align*}
  &F(s_1,\ldots,s_k;f)+F(s_1, \ldots,s_k;g)=F(s_1, \ldots,s_k;f+g),\\
  &F(s_1,\ldots,s_k;f)F(s_1, \ldots,s_k;g)=F(s_1, \ldots,s_k;f*g),
\end{align*}
where $(s_1, \ldots,s_k)$ lies in the region of absolute convergence for the series $F(s_1,\ldots,s_k;f)$ and $F(s_1,\ldots,s_k;g)$.
\end{thm}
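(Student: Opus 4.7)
The plan is to verify both identities directly from the definition of the multiple Dirichlet series, relying on absolute convergence to freely interchange summations. Throughout, I fix $(s_1,\ldots,s_k)$ in the common region of absolute convergence for $F(s_1,\ldots,s_k;f)$ and $F(s_1,\ldots,s_k;g)$.

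For the first identity, I would simply combine the termwise sums: writing both series over the same index set $\mathbb{N}^k$ and adding the summands at each $\bm{n}$ gives
\begin{align*}
F(s_1,\ldots,s_k;f)+F(s_1,\ldots,s_k;g) = \sum_{\bm{n}\in\mathbb{N}^k}\frac{f(\bm{n})+g(\bm{n})}{n_1^{s_1}\cdots n_k^{s_k}} = F(s_1,\ldots,s_k;f+g),
\end{align*}
since both series converge absolutely. This step is essentially formal; nothing deep is happening.

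For the product identity, I would expand the product of the two Dirichlet series as a double sum and re-index by $\bm{n}=\bm{a}\cdot\bm{b}$. Concretely,
\begin{align*}
F(s_1,\ldots,s_k;f)\,F(s_1,\ldots,s_k;g) &= \sum_{\bm{a}\in\mathbb{N}^k}\sum_{\bm{b}\in\mathbb{N}^k}\frac{f(\bm{a})g(\bm{b})}{(a_1 b_1)^{s_1}\cdots(a_k b_k)^{s_k}} \\
&= \sum_{\bm{n}\in\mathbb{N}^k}\frac{1}{n_1^{s_1}\cdots n_k^{s_k}}\sum_{\substack{\bm{a}\cdot\bm{b}=\bm{n}\\ \bm{a},\bm{b}\in\mathbb{N}^k}}f(\bm{a})g(\bm{b}),
\end{align*}
and the inner sum is exactly $(f*g)(\bm{n})$ by Definition \ref{mdp}, giving $F(s_1,\ldots,s_k;f*g)$. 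So the proof reduces to two computations: the expansion of the product, and the regrouping of the double sum by the coordinatewise product $\bm{a}\cdot\bm{b}=\bm{n}$.

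The only real content, and the step I would be most careful about, is justifying the rearrangement from a double sum over $(\bm{a},\bm{b})\in\mathbb{N}^{2k}$ to an iterated sum indexed first by $\bm{n}=\bm{a}\cdot\bm{b}$. Since each $\bm{n}\in\mathbb{N}^k$ has only finitely many factorizations $\bm{n}=\bm{a}\cdot\bm{b}$ (each coordinate $n_i$ being a positive integer with finitely many divisor factorizations), the inner sum is finite. The absolute convergence of $F(s_1,\ldots,s_k;f)$ and $F(s_1,\ldots,s_k;g)$ means $\sum_{\bm{a},\bm{b}}|f(\bm{a})g(\bm{b})|/|(a_1 b_1)^{s_1}\cdots(a_k b_k)^{s_k}|$ factors as a product of two convergent sums, hence Fubini/Tonelli (or the Cauchy product theorem applied iteratively) permits the reordering. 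This absolute convergence check is the only hurdle, and it is essentially routine.
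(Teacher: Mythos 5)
Your proof is correct and is the standard argument (termwise addition, and the Cauchy-product rearrangement of the absolutely convergent double sum grouped by $\bm{a}\cdot\bm{b}=\bm{n}$). The paper itself gives no proof, simply citing T\'oth's Proposition 10, and your argument is exactly the routine verification that reference supplies, with the absolute-convergence justification for the rearrangement correctly identified as the only point needing care.
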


\begin{cor}\label{R}\
Let $f\in U$. If $F(s_1,\ldots,s_k;f)$ and $F(s_1, \ldots,s_k;f^{-1})$ are absolutely convergent on $R\subset\mathbb{C}^k$, then $F(s_1, \ldots,s_k;f)$ has no zeros on $R$.
\end{cor}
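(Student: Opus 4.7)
The plan is to read this off directly from Theorem \ref{pro} applied to the pair $f, f^{-1}$. The key observation is that the Dirichlet series attached to the multiplicative identity $I$ is the constant $1$: since $I(\bm{n})$ vanishes except at $\bm{n}=\bm{1}$ where it takes the value $1$, the series
\[
  F(s_1,\ldots,s_k;I)=\sum_{m_1,\ldots,m_k=1}^{\infty}\frac{I(m_1,\ldots,m_k)}{m_1^{s_1}\cdots m_k^{s_k}}=1
\]
holds identically on $\mathbb{C}^k$.

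Next I would invoke Theorem \ref{pro}. Since $f\in U$, its inverse $f^{-1}\in U$ exists by Lemma \ref{thm3} and satisfies $f*f^{-1}=I$ by definition. Under the hypothesis that both $F(s_1,\ldots,s_k;f)$ and $F(s_1,\ldots,s_k;f^{-1})$ converge absolutely on $R$, Theorem \ref{pro} gives
\[
  F(s_1,\ldots,s_k;f)\cdot F(s_1,\ldots,s_k;f^{-1})=F(s_1,\ldots,s_k;f*f^{-1})=F(s_1,\ldots,s_k;I)=1
\]
for every $(s_1,\ldots,s_k)\in R$.

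Finally, I would conclude that a product of two complex numbers equal to $1$ forces each factor to be non-zero, so $F(s_1,\ldots,s_k;f)\neq 0$ throughout $R$, which is exactly the assertion. There is no genuine obstacle in this argument; the only thing to verify with care is that the absolute-convergence hypothesis on $R$ for \emph{both} series is precisely what is needed to legitimately apply the product formula of Theorem \ref{pro} pointwise on $R$.
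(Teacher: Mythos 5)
Your proposal is correct and follows exactly the same route as the paper: apply Theorem \ref{pro} to the pair $f,f^{-1}$ on $R$, observe $F(s_1,\ldots,s_k;I)=1$, and conclude that neither factor can vanish. The extra remarks you include (existence of $f^{-1}$ via Lemma \ref{thm3} and the explicit computation of $F(s_1,\ldots,s_k;I)$) are details the paper leaves implicit, but the argument is identical.
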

\begin{proof}
Let $(s_1, \ldots,s_k)\in R$. Then by applying Theorem $\ref{pro}$, we have
\begin{align*}
  F(s_1, \ldots,s_k;f)F(s_1, \ldots,s_k;f^{-1})=F(s_1, \ldots,s_k;I)=1.
\end{align*}
\end{proof}

To find the zero-free region of the series $F(s_1, \ldots,s_k;f)$, we have to find the region of absolute convergence $R\subset\mathbb{C}^k$, and to find the region $R$, we have to evaluate $f^{-1}$. For this purpose, we prepare the following lemma.
\begin{lem}\label{3.4}
For $\alpha>1$, we have
\begin{align*}
  \sum_{d|n}d^{ \alpha}\leq\zeta( \alpha)n^{ \alpha}.
\end{align*}
\end{lem}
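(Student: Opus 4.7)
The plan is to use the classical reciprocal-divisor trick. If $d$ runs through the divisors of $n$, then so does $n/d$, so I would rewrite
\[
  \sum_{d\mid n} d^{\alpha} = \sum_{d\mid n}\left(\frac{n}{d}\right)^{\alpha} = n^{\alpha}\sum_{d\mid n}\frac{1}{d^{\alpha}}.
\]
This is the first step, and it is the whole content of the lemma reduced to an easy bound.

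Next, since every divisor $d$ of $n$ is a positive integer, $\sum_{d\mid n} d^{-\alpha}$ is a partial sum (over a subset of $\mathbb{N}$) of the series $\sum_{d=1}^{\infty} d^{-\alpha}$, all of whose terms are positive. Because $\alpha > 1$, that series converges to $\zeta(\alpha)$, and in particular
\[
  \sum_{d\mid n}\frac{1}{d^{\alpha}} \leq \sum_{d=1}^{\infty}\frac{1}{d^{\alpha}} = \zeta(\alpha).
\]
Multiplying through by $n^{\alpha}$ gives the desired inequality.

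There is no real obstacle; the only thing to be careful about is the hypothesis $\alpha > 1$, which is exactly what ensures that $\zeta(\alpha)$ is finite so that the bound is meaningful. No analytic continuation or deeper property of $\zeta$ is needed, only absolute convergence of the defining Dirichlet series.
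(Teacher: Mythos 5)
Your proof is correct and is essentially identical to the paper's: both rewrite $\sum_{d\mid n}d^{\alpha}$ as $n^{\alpha}\sum_{d\mid n}d^{-\alpha}$ via the divisor symmetry $d\mapsto n/d$ and then bound the latter sum by the full series $\zeta(\alpha)$, using $\alpha>1$ for convergence. Nothing to add.
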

\begin{proof}
We have
\begin{align*}
  \sum_{d|n}d^{ \alpha}=n^\alpha\sum_{d|n}\left(\frac{d}{n}\right)^{ \alpha}=
  n^\alpha\sum_{d|n}\frac{1}{d^{ \alpha}}\leq\zeta( \alpha)n^{ \alpha}.
\end{align*}
\end{proof}
By the above lemma, we can evaluate the function $f^{-1}$.
\begin{thm}\label{eva}\
Let $f\in U$ satisfy the condition that there exist constants $C>0$ and $r_1,\ldots,r_k\in\mathbb{R}$ such that $|f(\bm{n})|\leq Cn_1^{r_1}n_2^{r_2}\cdots n_k^{r_k}$ for $\bm{n}\neq\bm{1}$. We put $\alpha_j>1+r_j$ $(j=1, \ldots,k)$ satisfying $\zeta(\alpha_1-r_1)\zeta( \alpha_2-r_2) \cdots\zeta( \alpha_k-r_k)\leq 1+|f(\bm{1})|/C$. Then we have
\begin{align*}
  |f^{-1}(\bm{n})|\leq \frac{n_1^{ \alpha_1}n_2^{ \alpha_2}\cdots n_k^{ \alpha_k}}{|f(\bm{1})|}.
\end{align*}
\end{thm}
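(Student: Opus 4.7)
The plan is to prove the bound by induction on the product $n_1 n_2 \cdots n_k$, using the recursive formula for $f^{-1}$ provided by Lemma \ref{thm3}. The base case $\bm{n} = \bm{1}$ is immediate since $|f^{-1}(\bm{1})| = 1/|f(\bm{1})|$ and the right-hand side equals $1/|f(\bm{1})|$ as well.

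For the inductive step, assume the bound holds for all $\bm{b}$ with $b_1 \cdots b_k < n_1 \cdots n_k$, and fix $\bm{n} \neq \bm{1}$. From Lemma \ref{thm3},
\[
|f^{-1}(\bm{n})| \leq \frac{1}{|f(\bm{1})|} \sum_{\substack{\bm{a}\cdot\bm{b}=\bm{n}\\ \bm{b}\neq\bm{n}}} |f(\bm{a})|\,|f^{-1}(\bm{b})|.
\]
Note that the condition $\bm{b} \neq \bm{n}$ is equivalent to $\bm{a} \neq \bm{1}$, so in every surviving term the hypothesis $|f(\bm{a})| \leq C a_1^{r_1}\cdots a_k^{r_k}$ applies. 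Moreover, since at least one $a_j > 1$, we have $b_1\cdots b_k < n_1\cdots n_k$, and the inductive bound on $|f^{-1}(\bm{b})|$ is available.

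Next I would substitute these bounds, write $b_j = n_j/a_j$, and pull out $n_j^{\alpha_j}$ to obtain a factor of $a_j^{r_j - \alpha_j}$ in the summand. This converts the sum over $\bm{a}\cdot\bm{b} = \bm{n}$ with $\bm{a}\neq\bm{1}$ into the divisor sum
\[
\frac{C\, n_1^{\alpha_1}\cdots n_k^{\alpha_k}}{|f(\bm{1})|^2} \sum_{\substack{\bm{a} \mid \bm{n}\\ \bm{a}\neq\bm{1}}} \frac{1}{a_1^{\alpha_1-r_1}\cdots a_k^{\alpha_k-r_k}},
\]
where $\bm{a}\mid\bm{n}$ means $a_j\mid n_j$ for each $j$. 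This last divisor sum factors coordinatewise as a product of single-variable sums, each of which, by the change of variables $d \mapsto n_j/d$ and Lemma \ref{3.4} applied with $\alpha = \alpha_j - r_j > 1$, is bounded by $\zeta(\alpha_j-r_j)$.

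Subtracting off the excluded term corresponding to $\bm{a}=\bm{1}$ (which contributes $1$) yields an upper bound of $\zeta(\alpha_1-r_1)\cdots\zeta(\alpha_k-r_k) - 1$, and the hypothesis on the $\alpha_j$ forces this to be at most $|f(\bm{1})|/C$. Multiplying through gives exactly $n_1^{\alpha_1}\cdots n_k^{\alpha_k}/|f(\bm{1})|$, closing the induction. The main bookkeeping obstacle is verifying that the exclusion $\bm{a}\neq\bm{1}$ matches cleanly with subtracting the $\bm{a}=\bm{1}$ term from the product of divisor sums, so that the calibration $\zeta(\alpha_1-r_1)\cdots\zeta(\alpha_k-r_k) \leq 1 + |f(\bm{1})|/C$ is used with exactly the right constant.
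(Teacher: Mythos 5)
Your proposal is correct and follows essentially the same route as the paper: induction via the recursion of Lemma \ref{thm3} (the paper inducts on $n_1+\cdots+n_k$ rather than the product, but either well-ordering works since $\bm b\neq\bm n$ forces both to decrease strictly), factorization of the divisor sum coordinatewise, the bound of Lemma \ref{3.4} (you use its equivalent normalized form $\sum_{d\mid n}d^{-\alpha}\leq\zeta(\alpha)$), and subtraction of the single excluded term to invoke the calibration $\zeta(\alpha_1-r_1)\cdots\zeta(\alpha_k-r_k)\leq 1+|f(\bm 1)|/C$. The only cosmetic difference is that you substitute $b_j=n_j/a_j$ where the paper substitutes $a_j=n_j/b_j$; the bookkeeping of the excluded term matches cleanly in both versions.
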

\begin{proof}
We use induction on $n_1+ \cdots+n_k$. In the case $n_1+ \cdots+n_k=k$, i.e. $\bm{n}=\bm{1}$, then $f^{-1}(\bm{1})=1/f(\bm{1})$, so we have
\begin{align*}
  |f^{-1}(\bm{1})|=\frac{1}{|f(\bm{1})|}.
\end{align*}
Next, let $d>k$, and assume that $ |f^{-1}(\bm{n})|\leq n_1^{ \alpha_1}n_2^{ \alpha_2}\cdots n_k^{ \alpha_k}/|f(\bm{1})|$ for all $\bm{n}\in \mathbb{N}^k$ which satisfies $n_1+ \cdots+n_k<d$. Then for $\bm{n}\in\mathbb{N}^k$ which satisfies $n_1+ \cdots+n_k=d$, by using Lemma \ref{3.4}, we have 
\begin{align*}
  |f^{-1}(\bm{n})|
  &\leq\left|\frac{1}{f(\bm{1})}\right|
  \sum_{\substack{\bm{a}\cdot\bm{b}=\bm{n}\\\bm{a},\bm{b}\in\mathbb{N}^k\\\bm{b}\neq\bm{n}}}
  |f(\bm{a})||f^{-1}(\bm{b})|\\
  &\leq\frac{C}{|f(\bm{1})|^2}
  \sum_{\substack{\bm{a}\cdot\bm{b}=\bm{n}\\\bm{b}\neq\bm{n}}}
  a_1^{r_1}b_1^{ \alpha_1} \cdots a_k^{r_k}b_k^{ \alpha_k}\\
  &=\frac{C}{|f(\bm{1})|^2}\left\{n_1^{r_1}\cdots n_k^{r_k}
  \left(\sum_{b_1|n_1}b_1^{ \alpha_1-r_1}\right)\cdots
  \left(\sum_{b_k|n_k}b_k^{ \alpha_k-r_k}\right)
  -n_1^{ \alpha_1}\cdots n_k^{ \alpha_k}\right\}\\
  &\leq\frac{C}{|f(\bm{1})|^2}\left(\zeta( \alpha_1-r_1)n_1^{ \alpha_1}\cdots
  \zeta( \alpha_k-r_k)n_k^{ \alpha_k}-n_1^{ \alpha_1}n_2^{ \alpha_2}\cdots n_k^{ \alpha_k}\right)\\
  &\leq  \frac{n_1^{ \alpha_1}n_2^{ \alpha_2}\cdots n_k^{ \alpha_k}}{|f(\bm{1})|}.
\end{align*}
\end{proof}
Finally, we obtain the main theorem.
\begin{thm}\label{zfr}\
Let $f$ and $\alpha_1, \ldots, \alpha_k$ satisfy the conditions in Theorem $\ref{eva}$. Then $F(s_1, \ldots,s_k;f)$ and $F(s_1, \ldots,s_k;f^{-1})$ have no zeros in the region
\[
  \left\{(s_1,\ldots,s_k)\in \mathbb{C}^k\ |
  \ \Re (s_j)>1+\alpha_j\ \ (j=1, \cdots,k)\right\}.
\]
Moreover, in the same region $F(s_1, \ldots,s_k;f)$ and $F(s_1, \ldots,s_k;f^{-1})$ satisfy the relation
\begin{align*}
  (F(s_1, \ldots,s_k;f))^{-1}=F(s_1,\ldots,s_k;f^{-1}).
\end{align*}
\end{thm}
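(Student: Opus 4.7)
The plan is to reduce Theorem \ref{zfr} directly to Corollary \ref{R}, which asserts that on any common region of absolute convergence of $F(s_1,\ldots,s_k;f)$ and $F(s_1,\ldots,s_k;f^{-1})$, the first series has no zeros and the two are reciprocals of each other. Thus the task amounts to verifying that both series converge absolutely in the region $\{(s_1,\ldots,s_k) \in \mathbb{C}^k : \Re(s_j) > 1 + \alpha_j \text{ for all } j\}$.

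First I would handle $F(s_1,\ldots,s_k;f)$. From the hypothesis $|f(\bm{n})| \leq C n_1^{r_1}\cdots n_k^{r_k}$ for $\bm{n} \neq \bm{1}$, the tail of $\sum_{\bm{n}} |f(\bm{n})| n_1^{-\Re s_1}\cdots n_k^{-\Re s_k}$ factors as $C \prod_{j=1}^k \sum_{n_j \geq 1} n_j^{-(\Re s_j - r_j)}$, which is finite as soon as $\Re(s_j) > 1 + r_j$. Since $\alpha_j > 1 + r_j$, the strictly stronger bound $\Re(s_j) > 1 + \alpha_j$ is more than enough.

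Next I would handle $F(s_1,\ldots,s_k;f^{-1})$ using Theorem \ref{eva}, which gives the pointwise estimate $|f^{-1}(\bm{n})| \leq n_1^{\alpha_1}\cdots n_k^{\alpha_k}/|f(\bm{1})|$ valid on all of $\mathbb{N}^k$. Then $\sum_{\bm{n}} |f^{-1}(\bm{n})| n_1^{-\Re s_1}\cdots n_k^{-\Re s_k}$ is dominated by $|f(\bm{1})|^{-1} \prod_{j=1}^k \zeta(\Re s_j - \alpha_j)$, which converges precisely when $\Re(s_j) > 1 + \alpha_j$ for every $j$, i.e.\ exactly on the claimed region.

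Once both series are absolutely convergent on this region, Theorem \ref{pro} applied to the pair $(f,f^{-1})$ gives $F(s_1,\ldots,s_k;f)\,F(s_1,\ldots,s_k;f^{-1}) = F(s_1,\ldots,s_k;I) = 1$, from which both the non-vanishing of each factor and the reciprocal identity follow simultaneously, as recorded in Corollary \ref{R}. There is no real obstacle in this argument; the substantive work was done in Theorem \ref{eva}, and the present theorem simply packages that estimate with the elementary convergence check and the product formula for Dirichlet series.
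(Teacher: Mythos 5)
Your proof is correct and follows essentially the same route as the paper: verify absolute convergence of $F(s_1,\ldots,s_k;f)$ on $\Re(s_j)>1+r_j$ from the hypothesis on $f$, verify absolute convergence of $F(s_1,\ldots,s_k;f^{-1})$ on $\Re(s_j)>1+\alpha_j$ from the bound in Theorem \ref{eva}, and then invoke Corollary \ref{R}. The only difference is that you spell out the factorization of the dominating series into products of one-variable zeta values, which the paper leaves implicit.
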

\begin{proof}
Since $f(\bm{n})\ll n_1^{r_1}n_2^{r_2}\cdots n_k^{r_k}$, $F(s_1, \ldots,s_k;f)$ is convergent absolutely in
\[
  \left\{(s_1,\ldots,s_k)\in \mathbb{C}^k\ |
  \ \Re (s_j)>1+r_j\ \ (j=1, \ldots,k)\right\}.
\]
Since $f^{-1}(\bm{n})\ll n_1^{ \alpha_1}\cdots n_k^{ \alpha_k}$ by Theorem \ref{eva}, $F(s_1, \ldots,s_k;f^{-1})$ is convergent absolutely in
\[
  \left\{(s_1,\ldots,s_k)\in \mathbb{C}^k\ |
  \ \Re (s_j)>1+\alpha_j\ \ (j=1, \ldots,k)\right\}.
\]
Hence we obtain the theorem by using Theorem $\ref{R}$.
\end{proof}


Next, we consider a restricted multiple Dirichlet series. To express the series (\ref{eq:EZ}), (\ref{eq:str}) and (\ref{eq:MT}), we define three functions as follows;
\begin{align*}
  &u_{EZ}(\bm{n}):=\begin{cases}
  1\ \ &(n_1<n_2< \cdots<n_k),\\
  0&({\rm otherwise}),
  \end{cases}\\
  &u^*(\bm{n}):=\begin{cases}
  1\ &(n_1\leq n_2\leq \cdots\leq n_k),\\
  0&({\rm otherwise}),
  \end{cases}\\
  &u_{MT}(\bm{n}):=\begin{cases}
  1\ &(n_{k+1}=n_1+n_2+\cdots+n_{k}),\\
  0&({\rm otherwise}).
  \end{cases}
\end{align*}
Then the series (\ref{eq:EZ}), (\ref{eq:str}) and (\ref{eq:MT}) can be expressed by
\begin{align*}
  &\zeta_{EZ,k}(s_1,\ldots,s_k)=F(s_1,\ldots,s_k;u_{EZ}),
\\
  &\zeta_k^* (s_1,\ldots,s_k)=F(s_1,\ldots,s_k;u^*),
\\
  &\zeta_{MT,k}(s_1,\ldots,s_k;s_{k+1})=F(s_1,\ldots,s_{k+1};u_{MT}).
\end{align*}

In \cite{aki2}, Akiyama and Ishikawa treated the multiple $L$-function which is defined by
\begin{align*}
  L_k(s_1,\ldots,s_k|\chi_1,\ldots,\chi_k):=\sum_{m_1<m_2< \cdots<m_k}
  \frac{\chi_1(m_1)\cdots\chi_k(m_k)}{m_1^{s_1}m_2^{s_2}\cdots m_k^{s_k}},
\end{align*}
where $\chi_j$ are Dirichlet characters. This is a generalization of the series (\ref{eq:EZ}). 
Moreover in \cite{mat4}, Matsumoto and Tanigawa treated the series
\begin{align*}
  \sum_{m_1,\ldots,m_k=1}^{\infty}\frac{a_1(m_1)a_2(m_2) \cdots a_k(m_k)}{m_1^{s_1}(m_1+m_2)^{s_2}\cdots (m_1+ \cdots+m_k)^{s_k}},
\end{align*}
where series $\sum_{m=1}^{\infty}a_j(m)/m^s\ (1\leq j \leq k)$ satisfy good conditions. This is also a generalization of the series (\ref{eq:EZ}). 

To consider such series, we generalize the series (\ref{eq:EZ}) as follows;
\begin{align*}
  \sum_{0<m_1<m_2< \cdots<m_k}\frac{f(m_1, \ldots,m_k)}{m_1^{s_1}m_2^{s_2}\cdots m_k^{s_k}}.
  \label{eq:segen}
\end{align*}
This series is a restriction of the multiple Dirichlet series (\ref{eq:mds}). To treat the restricted multiple Dirichlet series, we define three sets by
\begin{align*}
  \Omega_{EZ}&:=\{f\in\Omega\ |\ f(\bm{n})=0\ {\rm for}\ \bm{n}\ {\rm which\ does\ not\ satisfy}\ n_1<\cdots<n_k\},\\
  \Omega^*&:=\{f\in\Omega\ |\ f(\bm{n})=0\ {\rm for}\ \bm{n}\ {\rm which\ does\ not\ satisfy}\ n_1\leq\cdots\leq n_k\},\\
  \Omega_{MT}&:=\{f\in\Omega\ |\ f(\bm{n})=0\ {\rm for}\ \bm{n}\ {\rm which\ satisfies}\ n_k<n_1+n_2+\cdots+n_{k-1}\}.
\end{align*}
\begin{thm}\label{subring}
The ring $(\Omega^*,+,*)$ is a subring of the ring $(\Omega,+,*)$ which contains the unit group $\Omega^*\cap U$. The rings $(\Omega_{EZ},+,*)$ and $(\Omega_{MT},+,*)$ are subrings of the ring $(\Omega,+,*)$ which do not contain the unit group.
\end{thm}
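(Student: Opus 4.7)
The plan is to verify, for each of the three subsets, (i) closure under $+$, additive inverses, and $*$, and then (ii) the stated claim about units. Closure under $+$ and taking additive inverses is immediate: if $f,g$ both vanish outside a prescribed set of indices, so do $f+g$ and $-f$. Associativity and distributivity are inherited from $(\Omega,+,*)$. Hence the substantive work is closure under $*$ and the two unit-group assertions.

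For closure under $*$ in each of $\Omega^*$, $\Omega_{EZ}$, $\Omega_{MT}$, the key observation is a monotonicity/inequality lemma about coordinate divisor pairs $\bm{a}\cdot\bm{b}=\bm{n}$: if $f(\bm{a})$ and $g(\bm{b})$ are both nonzero, the constraint defining the subring is preserved by $\bm{n}$. For $\Omega^*$, if $a_i\le a_{i+1}$ and $b_i\le b_{i+1}$ for all $i$, then $n_i=a_ib_i\le a_{i+1}b_{i+1}=n_{i+1}$; the same argument with strict inequalities handles $\Omega_{EZ}$. For $\Omega_{MT}$, the nontrivial step is the estimate
\[
  n_k=a_kb_k\ge\Bigl(\sum_{i<k}a_i\Bigr)\Bigl(\sum_{i<k}b_i\Bigr)=\sum_{i,j<k}a_ib_j\ge\sum_{i<k}a_ib_i=\sum_{i<k}n_i,
\]
using positivity to discard off-diagonal terms. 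In each case, if $\bm{n}$ violates the defining condition, every term of $(f*g)(\bm{n})$ vanishes, so $f*g$ lies in the subring.

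For the unit-group claim about $\Omega^*$, I first note that $I\in\Omega^*$ since $\bm{1}$ trivially satisfies $1\le\cdots\le 1$, so $\Omega^*$ has the multiplicative identity. The content is to show $f\in\Omega^*\cap U\Rightarrow f^{-1}\in\Omega^*$, which I would prove by induction on $n_1+\cdots+n_k$ using the recursion of Lemma~\ref{thm3}. The base case $\bm{n}=\bm{1}$ is clear. For $\bm{n}\ne\bm{1}$ violating $n_1\le\cdots\le n_k$, every term $f(\bm{a})f^{-1}(\bm{b})$ in the recursive formula with $\bm{b}\ne\bm{n}$ has $|\bm{b}|<|\bm{n}|$, so the inductive hypothesis plus $f\in\Omega^*$ restricts attention to $\bm{a},\bm{b}$ both monotone; but then the closure calculation above forces $\bm{n}$ monotone, a contradiction, so every surviving term is zero and $f^{-1}(\bm{n})=0$.

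For $\Omega_{EZ}$ and $\Omega_{MT}$, the claim is that they contain no units. For $\Omega_{EZ}$ with $k\ge 2$, $\bm{1}$ fails the strict chain $1<1<\cdots<1$, so every $f\in\Omega_{EZ}$ has $f(\bm{1})=0$, whence $\Omega_{EZ}\cap U=\varnothing$; analogously for $\Omega_{MT}$ with $k\ge 3$, the inequality $1\ge k-1$ fails, so again $f(\bm{1})=0$ for every $f\in\Omega_{MT}$. I expect the inductive verification of $f^{-1}\in\Omega^*$ to be the only step requiring actual care; everything else is a direct unpacking of the defining inequalities together with the two closure estimates above.
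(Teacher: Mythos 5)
Your proposal is correct and follows the same overall architecture as the paper's proof: closure under $*$ is obtained by showing every term $f(\bm{a})g(\bm{b})$ vanishes when $\bm{n}=\bm{a}\cdot\bm{b}$ violates the defining condition, and $f^{-1}\in\Omega^*$ is obtained from the recursion of Lemma \ref{thm3} by induction on $n_1+\cdots+n_k$ (the paper phrases this as a minimal-counterexample argument, which is the same thing). The one place you genuinely diverge is the $\Omega_{MT}$ inequality: the paper deduces $n_k\geq n_1+\cdots+n_{k-1}$ from the Cauchy--Schwarz chain $(\sum a_ib_i)^2\leq(\sum a_i^2)(\sum b_i^2)\leq(\sum a_i)^2(\sum b_i)^2\leq a_k^2b_k^2$, whereas you simply expand $(\sum_{i<k}a_i)(\sum_{i<k}b_i)=\sum_{i,j<k}a_ib_j\geq\sum_{i<k}a_ib_i$ and discard the off-diagonal terms by positivity; your route is more elementary and arguably cleaner. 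You also explicitly prove the ``do not contain the unit group'' half by observing that $f(\bm{1})=0$ for every $f\in\Omega_{EZ}$ (when $k\geq2$) and every $f\in\Omega_{MT}$ (when $k\geq3$), a step the paper's proof omits entirely; your flagging of the $k$-dependence is a genuine point of care, since for $k=2$ the set $\Omega_{MT}$ does contain $I$ and hence units, so the theorem's second assertion implicitly assumes $k$ large enough.
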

\begin{proof}
Trivially all of the sets are closed with respect to the addition $+$. Hence it is sufficient to consider the Dirichlet product $*$.

First we consider the set $\Omega^*$. Let $f,g\in\Omega^*$. For $\bm{n}$ which does not satisfy $n_1\leq\cdots\leq n_k$, we have
\begin{align*}
  (f*g)(\bm{n})=\sum_{\bm{a}\cdot\bm{b}=\bm{n}}f(\bm{a})g(\bm{b}).
\end{align*}
Then, for each pair of $\bm{a}$ and $\bm{b}$ which satisfies $\bm{a}\cdot\bm{b}=\bm{n}$, at least one of them does not satisfy the inequalities $a_1\leq\cdots\leq a_k$ and $b_1\leq\cdots\leq b_k$. It implies that $(f*g)(\bm{n})=0$. Therefore $f*g\in \Omega^*$ holds. Next we assume $f\in \Omega^*\cap U$. Suppose that there exists an $\bm{n}$ which satisfies $f^{-1}(\bm{n})\neq0$ and does not satisfy $n_1\leq\cdots\leq n_k$. We choose such $\bm{n}$ which satisfies the condition that $n_1+\cdots+n_k$ is the smallest. Similarly to the above proof, an equality
\begin{align*}
  f^{-1}(\bm{n})=-\frac{1}{f(\bm{1})}\sum_{\substack{ \bm{a}\cdot\bm{b}=\bm{n}\\\bm{b}\neq\bm{n}}}f(\bm{a})f^{-1}(\bm{b})
\end{align*}
implies that $f^{-1}(\bm{n})=0$. It contradicts the assumption $f^{-1}(\bm{n})\neq0$. Hence $f^{-1}\in \Omega^*\cap U$ holds.

The statement that $(\Omega_{EZ},+,*)$ forms a subring can be proven similarly as above.

Next we prove the statement that $(\Omega_{MT},+,*)$ forms a subring. Let $f,g\in \Omega_{MT}$. For $\bm{n}$, we have
\begin{equation*}
  (f*g)(\bm{n})=\sum_{\bm{a}\cdot\bm{b}=\bm{n}}f(\bm{a})g(\bm{b})\label{eq:wa1}
\end{equation*}
If $\bm{a}$ and $\bm{b}$ satisfy the inequalities $a_k\geq a_1+\cdots+a_{k-1}$ and $b_k\geq b_1+\cdots+b_{k-1}$, the inequality 
\begin{align*}
  (a_1b_1+ \cdots+a_{k-1}b_{k-1})^2&\leq(a_1^2+\cdots+a_{k-1}^2)(b_1^2+\cdots+b_{k-1}^2)\\
  &\leq(a_1+\cdots+a_{k-1})^2(b_1+\cdots+b_{k-1})^2\\
  &\leq a_k^2b_k^2
\end{align*}
implies the inequality $n_k\geq n_1+\cdots+n_{k-1}$. Hence for $\bm{n}$ which satisfies $n_k<n_1+n_2+\cdots+n_{k-1}$, at least one of the inequalities $a_k<a_1+\cdots+a_{k-1}$ and $b_k<b_1+\cdots+b_{k-1}$ holds. Therefore the equation $(f*g)(\bm{n})=0$ is valid for $\bm{n}$ which satisfies $n_k<n_1+n_2+\cdots+n_{k-1}$. It implies that $f*g\in \Omega_{MT}$.
\end{proof}

By the above theorem, the product of two restricted multiple Dirichlet series is also a restricted multiple Dirichlet series, that is
\begin{align*}
  &\left(\sum_{0<m_1<\cdots<m_k}\frac{f(m_1,\ldots,m_k)}{m_1^{s_1}\cdots m_k^{s_k}}\right)
  \left(\sum_{0<n_1<\cdots<n_k}\frac{g(n_1,\ldots,n_k)}{n_1^{s_1}\cdots n_k^{s_k}}\right)\\
  &\qquad\qquad\qquad\qquad\qquad\qquad\qquad\qquad\qquad
  =\sum_{0<n_1<\cdots<n_k}\frac{(f*g)(n_1,\ldots,n_k)}{n_1^{s_1}\cdots n_k^{s_k}},\\
  &\left(\sum_{0<m_1\leq\cdots\leq m_k}\frac{f(m_1,\ldots,m_k)}{m_1^{s_1}\cdots m_k^{s_k}}\right)
  \left(\sum_{0<n_1\leq\cdots\leq n_k}\frac{g(n_1,\ldots,n_k)}{n_1^{s_1}\cdots n_k^{s_k}}\right)\\
  &\qquad\qquad\qquad\qquad\qquad\qquad\qquad\qquad\qquad
  =\sum_{0<n_1\leq\cdots\leq n_k}\frac{(f*g)(n_1,\ldots,n_k)}{n_1^{s_1}\cdots n_k^{s_k}},\\
  &\left(\sum_{m_k\geq m_1+\cdots+m_{k-1}}\frac{f(m_1,\ldots,m_k)}{m_1^{s_1}\cdots m_k^{s_k}}\right)
  \left(\sum_{n_k\geq n_1+\cdots+n_{k-1}}\frac{g(n_1,\ldots,n_k)}{n_1^{s_1}\cdots n_k^{s_k}}\right)\\
  &\qquad\qquad\qquad\qquad\qquad\qquad\qquad\qquad\qquad
  =\sum_{n_k\geq n_1+\cdots+n_{k-1}}\frac{(f*g)(n_1,\ldots,n_k)}{n_1^{s_1}\cdots n_k^{s_k}}.
\end{align*}
In addition, the inverse of a restricted multiple Dirichlet series is also a restricted multiple Dirichlet series, that is
\[
\left(\sum_{0<m_1\leq\cdots\leq m_k}\frac{f(m_1,\ldots,m_k)}{m_1^{s_1}\cdots m_k^{s_k}}\right)^{-1}
=\sum_{0<m_1\leq\cdots\leq m_k}\frac{f^{-1}(m_1,\ldots,m_k)}{m_1^{s_1}\cdots m_k^{s_k}}
\]
for $f\in\Omega^*\cap U$.

In \cite{mat3}, Matsumoto defined the Apostol-Vu multiple zeta function by
\begin{align*}
  \zeta_{AV,k-1}(s_1,\ldots,s_{k-1};s_k):=\sum_{0<m_1<\cdots<m_{k-1}}
  \frac{1}{m_1^{s_1}\cdots m_{k-1}^{s_{k-1}}(m_1+\cdots+m_{k-1})^{s_{k}}}.
\end{align*}
 To express this series, we define a function $u_{AV}$ as follows;
\begin{align*}
  u_{AV}(\bm{n}):=\begin{cases}
  1\ &(0<n_1<\cdots<n_k\ {\rm and}\ n_{k}=n_1+n_2+\cdots+n_{k-1}),\\
  0&({\rm otherwise}).
  \end{cases}
\end{align*}
Then $u_{AV}\in\Omega_{EZ}\cap\Omega_{MT}$ holds. If we define $\Omega_{AV}=\Omega_{EZ}\cap\Omega_{MT}$, then it follows from Theorem \ref{subring} that $\Omega_{AV}$ is a subring of the ring $(\Omega,+,*)$ which does not contain the unit group.

As mentioned in the introduction, the series (\ref{eq:str}) is absolutely convergent in the region (\ref{eq:abs}). This fact improves Theorem \ref{zfr}.
\begin{thm}\label{zfr2}
Let $f\in\Omega^*\cap U$ and $\alpha_1, \ldots, \alpha_k$ satisfy the conditions in Theorem $\ref{eva}$. Then $F(s_1, \ldots,s_k;f)$ and $F(s_1, \ldots,s_k;f^{-1})$ have no zeros in the region
\[
  \left\{(s_1,\ldots,s_k)\in \mathbb{C}^k\ |
  \ \Re (s_k(k-l+1))>l+\alpha_k(k-l+1)\ \ (l=1, \ldots,k)\right\}
\]
where $\alpha_k(l)=\alpha_l+\alpha_{l+1}+ \cdots+\alpha_k\ \ (l=1, \ldots,k)$. Moreover, in the same region $F(s_1, \ldots,s_k;f)$ and $F(s_1, \ldots,s_k;f^{-1})$ satisfy the relation
\begin{align*}
  (F(s_1, \ldots,s_k;f))^{-1}=F(s_1,\ldots,s_k;f^{-1}).
\end{align*}
\end{thm}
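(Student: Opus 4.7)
The plan is to reduce everything to the convergence of the star-type sum (\ref{eq:abs}) for which Matsumoto has already determined the region of absolute convergence. The hypothesis $f\in\Omega^*\cap U$ is the crucial input, because it forces the series $F(s_1,\ldots,s_k;f)$ to be a restricted (star-type) sum over $0<m_1\le\cdots\le m_k$, and by Theorem \ref{subring} we also have $f^{-1}\in\Omega^*\cap U$, so $F(s_1,\ldots,s_k;f^{-1})$ is likewise a star-type sum.

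First I would establish absolute convergence of $F(s_1,\ldots,s_k;f^{-1})$ in the claimed region. By Theorem \ref{eva} we have the bound $|f^{-1}(\bm{m})|\le m_1^{\alpha_1}\cdots m_k^{\alpha_k}/|f(\bm{1})|$, so
\[
\sum_{0<m_1\le\cdots\le m_k}\frac{|f^{-1}(\bm{m})|}{m_1^{\Re(s_1)}\cdots m_k^{\Re(s_k)}}
\le\frac{1}{|f(\bm{1})|}\sum_{0<m_1\le\cdots\le m_k}\frac{1}{m_1^{\Re(s_1)-\alpha_1}\cdots m_k^{\Re(s_k)-\alpha_k}}.
\]
Applying Matsumoto's convergence region (\ref{eq:abs}) to the shifted variables $(s_j-\alpha_j)$, the right-hand side converges when $\Re(s_k(k-l+1))-\alpha_k(k-l+1)>l$ for each $l=1,\ldots,k$, which is exactly the region in the statement.

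Next I would show that $F(s_1,\ldots,s_k;f)$ also converges absolutely in the same region. Since $|f(\bm{n})|\le Cn_1^{r_1}\cdots n_k^{r_k}$, the analogous shift argument shows $F(s_1,\ldots,s_k;f)$ converges absolutely whenever $\Re(s_k(k-l+1))>l+r_k(k-l+1)$. Because $\alpha_j>1+r_j>r_j$, we have $\alpha_k(k-l+1)>r_k(k-l+1)$, so the region for $F(f^{-1})$ is contained in the region for $F(f)$; hence both series converge absolutely in the intersection, which is precisely the region stated in the theorem.

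Finally, applying Corollary \ref{R} on this common region of absolute convergence yields $F(s_1,\ldots,s_k;f)\cdot F(s_1,\ldots,s_k;f^{-1})=F(s_1,\ldots,s_k;I)=1$, which simultaneously gives the non-vanishing of both series and the reciprocal relation. There is no real obstacle here: the whole argument is bookkeeping built around Matsumoto's region (\ref{eq:abs}) and the $f^{-1}$-bound from Theorem \ref{eva}; the only mild point to be careful about is making sure the shift of variables $s_j\mapsto s_j-\alpha_j$ is applied inside the star-ordered sum, which is legitimate because of the containment $f,f^{-1}\in\Omega^*\cap U$ guaranteed by Theorem \ref{subring}.
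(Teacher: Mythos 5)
Your argument is correct and is essentially the same as the paper's: both proofs bound $|f^{-1}|$ via Theorem \ref{eva}, invoke the star-type absolute-convergence region (\ref{eq:abs}) with the variables shifted by $\alpha_j$ (resp.\ $r_j$), and conclude with Corollary \ref{R}. Your version merely makes explicit two points the paper leaves implicit, namely that $f^{-1}\in\Omega^*$ by Theorem \ref{subring} and that the region for $F(s_1,\ldots,s_k;f^{-1})$ is contained in that for $F(s_1,\ldots,s_k;f)$ since $\alpha_j>r_j$.
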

\begin{proof}
Since $f(\bm{n})\ll n_1^{r_1}n_2^{r_2}\cdots n_k^{r_k}$, $F(s_1, \ldots,s_k;f)$ is absolutely convergent in
\[
  \left\{(s_1,\ldots,s_k)\in \mathbb{C}^k\ |
  \ \Re (s_k(k-l+1))>l+r_k(k-l+1)\ \ (l=1, \ldots,k)\right\},
\]
where $r_k(l)=r_l+r_{l+1}+ \cdots+r_k\ \ (l=1,\ldots,k)$. Since $f^{-1}(\bm{n})\ll n_1^{ \alpha_1}\cdots n_k^{ \alpha_k}$ by Theorem \ref{eva}, $F(s_1, \ldots,s_k;f^{-1})$ is convergent absolutely in
\[
  \left\{(s_1,\ldots,s_k)\in \mathbb{C}^k\ |
  \ \Re (s_k(k-l+1))>l+\alpha_k(k-l+1)\ \ (l=1, \ldots,k)\right\}.
\]
Hence we obtain the theorem by using Theorem $\ref{R}$.
\end{proof}

Applying Theorem \ref{zfr}, Theorem \ref{zfr2} and the identity theorem, we have the following corollary.
\begin{cor}\label{last}
 $\zeta_{EZ,k}(s_1,\ldots,s_k)+1$ and $\zeta_k^*(s_1,\ldots,s_k)$ have no zeros in the region
\[
  S:=\left\{(s_1,\ldots,s_k)\in \mathbb{C}^k\ |
  \ \Re (s_k(k-l+1))>l+\alpha_k(k-l+1)\ \ (l=1, \ldots,k)\right\}
\]
where $\alpha_i>1$ $(i=1, \ldots,k)$ satisfy $\zeta(\alpha_1)\zeta( \alpha_2) \cdots\zeta( \alpha_k)\leq 2$. In the same region $S$, the reciprocal of the two functions have multiple Dirichlet series expressions
\begin{align*}
  (\zeta_{EZ,k}(s_1,\ldots,s_k)+1)^{-1}&=F(s_1,\ldots,s_k;(u_{EZ}+I)^{-1}),\\
  (\zeta_k^* (s_1,\ldots,s_k))^{-1}&=F(s_1,\ldots,s_k;(u^*)^{-1}),
\end{align*}
respectively. Furthermore $F(s_1,\ldots,s_k;(u_{EZ}+I)^{-1})$ and $F(s_1,\ldots,s_k;(u^*)^{-1})$ are continued meromorphically to the whole space.
\end{cor}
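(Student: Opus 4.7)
The plan is to reduce the claim to an application of Theorem \ref{zfr2} with $f=u_{EZ}+I$ and $f=u^*$ respectively. First I would verify the membership conditions: both $u_{EZ}$ and $I$ vanish outside the region $n_1\leq\cdots\leq n_k$ (the first because strict inequalities imply non-strict ones, the second because it is supported only at $\bm{1}$), so $u_{EZ}+I\in\Omega^*$; since $(u_{EZ}+I)(\bm{1})=1\neq 0$, it lies in $\Omega^*\cap U$. The same argument, even more directly, gives $u^*\in\Omega^*\cap U$. Observe that $F(s_1,\ldots,s_k;u_{EZ}+I)=\zeta_{EZ,k}(s_1,\ldots,s_k)+1$ and $F(s_1,\ldots,s_k;u^*)=\zeta_k^*(s_1,\ldots,s_k)$.

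Next I would verify the growth hypothesis of Theorem \ref{eva}. In both cases $|f(\bm{n})|\leq 1$ for all $\bm{n}$, so one may take $C=1$ and $r_1=\cdots=r_k=0$; moreover $|f(\bm{1})|=1$. The constraint $\zeta(\alpha_1-r_1)\cdots\zeta(\alpha_k-r_k)\leq 1+|f(\bm{1})|/C$ then becomes precisely $\zeta(\alpha_1)\cdots\zeta(\alpha_k)\leq 2$, matching the hypothesis on $\alpha_1,\ldots,\alpha_k$ in the corollary. With these parameters, Theorem \ref{zfr2} delivers both the absence of zeros on $S$ and the two reciprocal identities as Dirichlet series on $S$.

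For the meromorphic continuation of $F(s_1,\ldots,s_k;(u_{EZ}+I)^{-1})$ and $F(s_1,\ldots,s_k;(u^*)^{-1})$ to the whole space, I would invoke the identity theorem in several complex variables. As recalled in the introduction, $\zeta_{EZ,k}+1$ and $\zeta_k^*$ each admit meromorphic continuation to $\mathbb{C}^k$; hence their reciprocals are meromorphic on $\mathbb{C}^k$. Since these meromorphic reciprocals agree on the nonempty open set $S$ with the two absolutely convergent Dirichlet series, the identity theorem extends each Dirichlet series to a meromorphic function on $\mathbb{C}^k$.

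I do not anticipate a genuine obstacle; the argument is essentially a bookkeeping reduction to Theorem \ref{zfr2} combined with a standard identity-theorem argument. The one point meriting care is the use of $u_{EZ}+I$ rather than $u_{EZ}$ itself: because $u_{EZ}(\bm{1})=0$, the function $u_{EZ}$ is not in $U$ and is outside the scope of Theorems \ref{eva} and \ref{zfr2}. Adding $I$ shifts its value at $\bm{1}$ to $1$, restoring invertibility and explaining why the corollary is stated for $\zeta_{EZ,k}+1$ rather than $\zeta_{EZ,k}$ directly.
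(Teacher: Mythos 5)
Your proposal is correct and follows essentially the same route as the paper, which proves the corollary simply by citing Theorem \ref{zfr2} (together with Theorem \ref{zfr}) and the identity theorem; your verification that $u_{EZ}+I,\,u^*\in\Omega^*\cap U$ with $C=1$, $r_1=\cdots=r_k=0$, $|f(\bm{1})|=1$ is exactly the intended bookkeeping, and your remark on why one must pass to $u_{EZ}+I$ matches the paper's reasoning.
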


In the last corollary we obtained the zero free region of $\zeta^*_k$. This is not the best possible region. Similar to \cite[equation (2)]{Spi}, we find that the region
\[
  S':=\left\{(s_1,\ldots,s_k)\in \mathbb{C}^k\ |
  \ \zeta_k^* (\Re s_1,\ldots,\Re s_k)<2,\ \Re(s_k(k-l+1))>l\ \ (l=1,\ldots,k) \right\}
\]
is also the zero free region, and this region has a point which is not contained in $S$, that is, $S'\setminus S\neq \emptyset $. For example, when $k=2$, $(s_1,s_2)=(2,2)\notin S$, since for $(s_1,s_2)\in S$, $s_2$ needs to satisfy the condition $\Re(s_2)>2$. On the other hand, $(2,2)\in S'$, since by \cite[corollary 2.3]{Hof} we have
\begin{align*}
  \zeta_2^*(2,2)&=\zeta_{EZ,2}(2,2)+\zeta(4)=\frac{\pi^4}{120}+\frac{\pi^4}{90}=\frac{7\pi^4}{360}<2.
\end{align*}
Hence $(2,2)\in S'\setminus S$, so $S$ is not the best possible region. However, $S$ has a simpler expression than $S'$. This is an advantage of Corollary \ref{last}.


Graduate School of Mathematics\\
Nagoya University\\
Chikusa-ku, Nagoya 464-8602, Japan\\
E-mail:\ m11022v@math.nagoya-u.ac.jp

\end{document}